\theoremstyle{plain}
\newtheorem{thm}{Theorem}[section]
\newtheorem{lem}[thm]{Lemma}
\newtheorem{prop}[thm]{Proposition}
\newtheorem{defn}[thm]{Definition}
\theoremstyle{remark}
\newtheorem{rem}{Remark}
\numberwithin{equation}{section}
\newcommand{\average}{{\mathchoice {\kern1ex\vcenter{\hrule height.4pt
width 6pt depth0pt} \kern-9.7pt} {\kern1ex\vcenter{\hrule
height.4pt width 4.3pt depth0pt} \kern-7pt} {} {} }}
\def\R{\mathbb{R}}
\begin{document}

\title[$C^{2s}$ regularity for fully nonlinear nonlocal equations]{$C^{2s}$ regularity for fully nonlinear nonlocal equations with bounded right hand side}

\author{Hern\'an Vivas}

\address{Instituto de Investigaciones Matem\'{a}ticas Luis A. Santal\'{o} - CONICET, Ciudad Universitaria, Pabell\'{o}n I (1428) Av. Cantilo s/n - Buenos Aires, Argentina}

\address{Centro Marplatense de Investigaciones Matem\'aticas - CIC, De\'an Funes 3350, 7600 Mar del Plata, Argentina}

\email{havivas@mdp.edu.ar}

\keywords{Fully nonlinear equations, integro-differential equations.}

\subjclass[2010]{35J60, 60G52}

\begin{abstract}
We establish sharp $C^{2s}$ interior regularity estimates for solutions of fully nonlinear nonlocal equations with bounded right hand side. More precisely, we show that  if $I$ is a fully nonlinear nonlocal concave or convex elliptic operator and $f\in L^\infty(B_1)$ then
\[
Iu=f\quad\textrm{ in }\quad B_1 \quad \Rightarrow\quad u\in C^{2s}(B_{1/2}).
\]
This result generalizes the linear counterpart proved by Ros-Oton and Serra and extends previous available results for fully nonlinear nonlocal operators. As an application, we get a basic regularity estimate for the nonlocal two membranes problem.

\end{abstract}

\maketitle

\section{Introduction}

The aim of this work is to prove $C^{2s}$ interior regularity estimates for solutions of nonlocal fully nonlinear concave (or convex) elliptic equations with bounded right hand side. More precisely, our main result states that if $I$ is a nonlocal fully nonlinear concave (or convex) elliptic operator of order $2s$ (see Definition \ref{def.ellip}) and $f\in L^\infty(B_1)$ then
\[
Iu=f\quad\textrm{ in }\quad B_1\quad \Rightarrow\quad u\in C^{2s}(B_{1/2})
\]
where the equation is to be understood in the viscosity sense, see Definition \ref{def.visc}.

Nonlocal equations have attracted much interest both in the PDE and Probability communities in the past twenty years or so, mainly due to the wide range of problems arising from Physics, Biology, Engineering and Finance (among others) that seem to be well described by these type of models. Indeed, nonlocal equations arise naturally in the context of L\'evy processes, a type of stochastic process in which the path is not necessarily continuous and solutions are allowed to jump; these processes proved to be very useful for describing diffuson-type phenomena which present features that are not captured by the classical Browinan Motion. We refer the reader to \cite{humphries} for examples related to search/pretador-prey models in Biology and to the introductory book of Bucur and Valdinoci \cite{BV16} for a very enlightenig description problems coming from Physics, such as water waves and the Schr\"odinger equation or nonlocal phase transitions, as well as profuse references for other models. Furthermore, Ros-Oton's \cite{Ro1} and Garofalo's \cite{Ga} survey papers provide an overview of the mathematical landmark in the subject and extensive references. Finally, we recomend the nice survey \cite{Ap} and the references therein for an overview of the probabilistic approach to these type of problems and the phenomena related to them and the book \cite{CT16} for a thorough discussion of L\'evy processes with jumps in the context of Mathematical Finance. 

From our (PDE) perspective, the basic nonlocal operators that are of interest are of the following form:
\begin{equation}\label{eq.L}
L u(x)=\int_{\mathbb{R}^n}(u(x+y)+u(x-y)-2u(x))K(y)\:dy
\end{equation}
where some prescriptions are set on the kernel $K$ to ensure ellipticity (see \eqref{eq.ker} below). Note carefully that in order to compute the value of the operator at some point $x$ we need information of the values of the function $u$ in the whole space $\mathbb{R}^n$, hence the term ``nonlocal''.

Although the development of some aspects of the theory of nonlocal (or fractional, or integro-differential) operators from the point of view of Analysis can be traced back several decades, either from a Harmonic Analysis point of view as found in the book by Stein \cite{St} or a potential theoretic approach as in the classical reference of Landkof \cite{Land}, it was the seminal work of Caffarelli and Silvestre in 2009 \cite{CS1} that opened up the way for a (so to speak) modern regularity theory for elliptic nonlocal equations. That paper started a program, continued in \cite{CS2} and \cite{CS3}, to develop a regularity theory for fully nonlinear nonlocal elliptic equations in a parallel fashion to that of the nowadays classic second order theory, cf. \cite{CC}. Since then, the subject has been and continues to be a very active research area, as the reader can see in the surveys mentioned above. 

In particular, in the aforementioned paper of Caffarelli and Silvestre the concept of ellipticity is made precise for integro-differential operators. This is done somehow extending the local definition of ellipticity; indeed, recall that if $F$ is a fully nonlinear \emph{second order} operator, $F$ is said to be (uniformly) elliptic if it satisfies
\[
\mathcal{M}^-(X-Y)\leq F(X)-F(Y)\leq \mathcal{M}^+(X-Y)
\]  
for any pair of symmetric matrices $X$ and $Y$ where $\mathcal{M}^-$ and $\mathcal{M}^+$ are the Pucci extremal operators defined as the infimum and supremum over all linear elliptic operators (or equivalently by the sum of the negative and positive eigenvalues multiplied by the respective ellipticity constants, see \cite{CC}). In the integro-differential setting, given some class of linear nonlocal operators $\mathcal{L}$ we can define the analogous Pucci extremal operators by
\begin{equation}\label{eq.npucci}
\mathcal{M}^+_{\mathcal{L}}u(x):=\sup_{L\in\mathcal{L}}\: L u(x)\quad\textrm{ and }\quad \mathcal{M}^-_{\mathcal{L}}u(x):=\inf_{L\in\mathcal{L}}\: L u(x)
\end{equation}
and give the analogous definition of ellipticity (see Definition \ref{def.ellip} below). The crucial notion of viscosity solution can also be adapted to this context (Definition \ref{def.visc}).

In this paper, we are interested in studying regularity theory for viscosity solutions of concave fully nonlinear integro-differential operators of the form
\begin{equation}\label{eq.L1}
Iu(x)=\inf_{\alpha\in\mathcal{A}} L_\alpha u(x)
\end{equation}
where, for any $\alpha\in\mathcal{A}$, $L_\alpha$ is an operator of the form \eqref{eq.L} with kernel $K_\alpha$ and each kernel in the family $\{K_\alpha(\cdot)\}_{\alpha\in\mathcal{A}}$ satisfies
\begin{equation}\label{eq.ker}
0<\frac{\lambda}{|y|^{n+2s}}\leq K_\alpha(y)\leq \frac{\Lambda}{|y|^{n+2s}},\quad\quad K_\alpha(y)=K_\alpha(-y)
\end{equation}
for some \emph{ellipticity constants} $\lambda$ and $\Lambda$. These restrictions are going to be assumed hereafter and, following \cite{CS1}, such operators are going to be said to belong to the class $\mathcal{L}_0$. We point out that operators in $\mathcal{L}_0$ lie between to multiples of the \emph{Fractional Laplacian}:
\[
-(-\Delta)^su(x):= C_{n,s}\int\frac{u(x+y)+u(x-y)-2u(x)}{|y|^{n+2s}}\:dy,
\]
arguably the most important nonlocal operator from the elliptic PDE viewpoint. On the other hand, the symmetry assumption on the kernels is standard although there are available results for nonsymetric kernels, see for instance \cite{CD}. Also, since whenever $I$ is concave $-I$ is convex we will omit the convex operators from here forward as the proofs of the regularity results are equivalent.

Since the class of linear nonlocal operators is much richer than the local one, we have that  different choices in the class $\mathcal{L}$ of linear operators in the definition \eqref{eq.npucci} give rise to different ``ellipticity classes''. The class $\mathcal{L}_0$ is somehow the largest class for which regularity theory can be developed, since no assumptions are made on the smoothness of the kernel outside the origin and they can be highly oscillatory and irregular. They have been therefore referred to as \emph{rough} kernels (see \cite{K} or \cite{Se1}).  

The main result of this paper is summarized in the following theorem: 

\begin{thm}\label{thm.c2s}
Let $s\in(0,1)$, $I$ a nonlocal operator of the form \eqref{eq.L1}-\eqref{eq.ker} and $u$ be a bounded viscosity solution of 
\[
Iu=f\quad\textrm{ in }\quad B_1.
\] 
Assume that $f\in L^\infty(B_1)$ and that it is continuous in $B_1$ (no modulus of continuity is assumed). 

Then 
\begin{enumerate}
\item if $s\neq 1/2$, $u\in C^{2s}(B_{1/2})$ and 
\begin{equation}\label{eq.2}
\|u\|_{C^{2s}(B_{1/2})}\leq C\left(\|f\|_{L^\infty(B_1)}+\|u\|_{L^\infty(\mathbb{R}^n)}\right)
\end{equation} 
\item if $s= 1/2$, $u\in C^{2s-\varepsilon}(B_{1/2})$ for any $\varepsilon>0$ and 
\begin{equation}\label{eq.3}
\|u\|_{C^{2s-\varepsilon}(B_{1/2})}\leq C\left(\|f\|_{L^\infty(B_1)}+\|u\|_{L^\infty(\mathbb{R}^n)}\right).
\end{equation} 
\end{enumerate}
The constant in \eqref{eq.2} depends only on $n,s,\lambda,\Lambda$ and the constant in \eqref{eq.3} depends also on $\varepsilon$. 
\end{thm}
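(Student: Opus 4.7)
The plan is to prove Theorem \ref{thm.c2s} by a Campanato-type approximation at every scale combined with a compactness-and-contradiction (blow-up) scheme, in which the concavity of $I$ plays the decisive role. Concretely, the central step is to show that for every $x_0\in B_{1/2}$ there exists a polynomial $P_{x_0}$ of degree $\lfloor 2s\rfloor$ (a constant when $s<1/2$, an affine function when $s>1/2$) such that
\[
\|u-P_{x_0}\|_{L^\infty(B_r(x_0))}\le Cr^{2s}\bigl(\|f\|_{L^\infty(B_1)}+\|u\|_{L^\infty(\R^n)}\bigr)\qquad\text{for all }r\in(0,1/4),
\]
with $C$ independent of $x_0$ and $r$. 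When $s\neq 1/2$ the Campanato characterization of non-integer H\"older spaces then yields \eqref{eq.2}; the borderline $s=1/2$ is handled with an $\varepsilon$-loss. As a starting point I would invoke the interior H\"older estimates of Caffarelli--Silvestre to obtain $u\in C^\alpha(B_{3/4})$ for some $\alpha>0$, enough to ensure that $Iu$ is evaluated pointwise and to initiate the blow-up.

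The approximation estimate itself is proved by contradiction. If it failed, one extracts sequences $I_k,u_k,f_k,x_k,r_k$ and optimal approximating polynomials $P_k$ such that the rescalings
\[
v_k(x):=\frac{(u_k-P_k)(x_k+r_k x)}{\Theta_k},\qquad \|v_k\|_{L^\infty(B_1)}=1,\qquad \frac{\Theta_k}{r_k^{2s}}\to\infty,
\]
are (for an appropriately chosen defect $\Theta_k$) $L^2(B_1)$-orthogonal to polynomials of degree $\le \lfloor 2s\rfloor$. Here the concavity of $I_k$ enters essentially: since each $L_\alpha$ annihilates affine functions (by the symmetry of $K_\alpha$), subtracting $P_k$ preserves the equation, and combined with translation invariance this forces the rescaled $v_k$ to satisfy $I_k v_k = \tilde f_k$ with $\|\tilde f_k\|_\infty \le \|f_k\|_\infty/\Theta_k\to 0$. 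The two-sided Pucci bracketing $\mathcal{M}^-_{\mathcal{L}_0} v_k \le \tilde f_k \le \mathcal{M}^+_{\mathcal{L}_0} v_k$ together with the H\"older estimates of Caffarelli--Silvestre for the extremal operators then gives equi-H\"older bounds on $\{v_k\}$ on compact subsets of $\R^n$.

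Passing to a subsequence yields local uniform convergence $v_k\to v_\infty$; careful control of the tails (possible because the maximality of $r_k$ together with the orthogonality forces $|v_k(x)|\le C(1+|x|^{2s})$) allows passage to the limit in the nonlocal equation, so that $v_\infty$ solves a translation-invariant concave fully nonlinear equation of order $2s$ on all of $\R^n$ with zero right-hand side, grows of order at most $|x|^{2s}$ at infinity, and remains $L^2(B_1)$-orthogonal to polynomials of degree $\le \lfloor 2s\rfloor$. A Liouville-type theorem for concave nonlocal operators in the spirit of those developed by Serra then forces $v_\infty$ to itself be a polynomial of admissible degree, which contradicts the normalization $\|v_\infty\|_{L^\infty(B_1)}=1$ combined with the orthogonality. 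In the borderline case $s=1/2$ the growth exponent $2s=1$ exactly matches the space of affine polynomials, so Liouville fails marginally and one settles for an $\varepsilon$-loss, yielding \eqref{eq.3}. The principal technical obstacle I anticipate is exactly this tail control and the justification of the limit equation: the growth of $v_k$ is on the threshold where the nonlocal integrand barely converges, so the delicate part of the proof is ensuring enough compactness in the appropriate weighted global function space to extract and identify $v_\infty$ and to verify that $v_\infty$ solves a well-defined concave equation inheriting the ellipticity of $\mathcal{L}_0$.
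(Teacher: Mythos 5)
Your proposal follows essentially the same blow-up-plus-Liouville strategy as the paper, but two of the steps you gloss over are precisely where the paper makes nontrivial choices, and as written your version leaves a real gap.

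First, the normalization. You normalize the rescalings $v_k$ by the $L^\infty$ defect $\Theta_k$ of the best-approximating polynomial and then claim that ``maximality of $r_k$ together with orthogonality'' forces $|v_k(x)|\le C(1+|x|^{2s})$. In the $L^\infty$-Campanato framework this does not come for free: passing from $B_1$ to $B_R$ in the rescaled variables requires comparing the optimal polynomial $P_{k,r_k,x_k}$ at scale $r_k$ to the optimal polynomial at scale $Rr_k$, and one needs an explicit quantitative lemma (telescoping over dyadic scales) to control $\|P_{k,Rr_k,x_k}-P_{k,r_k,x_k}\|_{L^\infty}$; without it the growth bound for $v_k$ degenerates as $\Theta_k\to 0$. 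You correctly identify this tail control as ``the principal technical obstacle,'' but you do not resolve it. The paper sidesteps the issue entirely by normalizing with a quantity $\theta(r)$ built from the $C^\alpha$ \emph{seminorm} of the $u_k$'s (with $\alpha$ just below $2s$). Since seminorms are invariant under subtracting polynomials, the bound $[v_m]_{C^\alpha(B_R)}\le CR^{\gamma}$ with $\gamma=2s-\alpha$ follows immediately from monotonicity of $\theta$, and the $L^\infty$ growth $\lesssim R^{2s}$ is then a consequence, not a separate estimate. This choice is what makes the stability Lemmas (passing the equation to the limit, controlling the nonlocal tails in $L^1(\omega_s)$) go through without the polynomial-comparison machinery. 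Relatedly, this seminorm-based blow-up requires $u\in C^\alpha(\R^n)$ globally, which a bounded solution need not be; the paper handles this with a cutoff $\eta u$ that turns the far-field contribution into a uniformly bounded lower-order coefficient. Your proposal does not address this reduction.

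Second, the Liouville theorem. Invoking a result ``in the spirit of those developed by Serra'' is not enough, because the target regularity $2s$ may exceed $1+\alpha$, so the $C^{1,\alpha}$-type Liouville classification from \cite{Se1} cannot close the argument. The paper's Theorem~\ref{thm.liou} is a separate result whose proof rests on the higher-order $C^{2s+\tilde\alpha}$ estimates for \emph{concave} nonlocal operators in \cite{Se2}, and its hypotheses are formulated precisely to capture what the blow-up limit actually inherits: the Pucci bracketing on increments $v(\cdot+h)-v$ and, crucially, the inequality $\mathcal{M}^+\bigl(\int v(\cdot+h)\mu(h)\,dh - v\bigr)\ge 0$ obtained from Jensen's inequality and concavity of $I$. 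Your proposal mentions concavity ``plays the decisive role'' but never derives this second condition on averages, which is exactly the concavity input that unlocks the $C^{2s+\tilde\alpha}$ interior estimate and hence the rigidity. Without it, the Pucci bracketing alone would only give $C^{1,\alpha}$ and the contradiction would not reach the required exponent when $2s>1+\alpha$.
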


Let us make some remarks on this theorem. First, it extends known results available for linear operators to the nonlinear setting; indeed our result is exactly the same as the one obtained by Ros-Oton and Serra in the linear case, cf. \cite{RS1}, Theorem 1.1 (b). It is a curiosity worth pointing out that for $s\neq 1/2$ we get regularity of order $2s$, unlike the $2s-\varepsilon$ that we get otherwise or the $C^{1,1-\varepsilon}$ that is achieved in the local case. 

Moreover, our result gives the natural extension of the results by Krivenstov \cite{K} and Serra \cite{Se1} (in the elliptic setting). Indeed, in the first of these papers, $C^{1,\alpha}$ regularity is obtained for operators of order $s>1/2$. We cannot expect this result to be improved substantially as his operators are not assumed to be convex or concave. In Serra's work he proves $C^\beta$ regularity with 
\[
\beta:=\min\{1+\alpha,2s\}-\varepsilon
\]
for any $\varepsilon>0$ with no restricition on the exponent and for parabolic equations, but again with no futher assumptions on the operators, hence the restriction on the regularity. 

In this paper we follow the ideas presented by Serra in the aforecited paper where, to the best to the author's knowledge, they appeared for the first time in the context of nonlocal equations. Serra's proof is based on a compactness argument combined with a Liouville theorem used to classify global solutions. This approach proved to be very useful in the context of nonlocal equations, see for instance the work of Ros-Oton and Serra, \cite{RS1} \cite{RS2}, Fern\'andez Real and Ros-Oton \cite{FR1} or Ros-Oton and the author \cite{RV} for some instances where this technique was used successfully to tackle regularity issues. 

The passage from the $C^{1,\alpha}$ regularity in \cite{Se1} to our $C^{2s}$ regularity is not immediate (of course, assuming that $2s>1+\alpha$), in the sense that the strategy used to prove a Liouville theorem in \cite{Se1} is not good for our higher regularity goal. Here we have to appeal to the higher order regularity results in \cite{Se2} instead to prove a such a result, namely that global solutions with an appropriate growth have to be planes. 

Regarding the hypothesis, they are essentially sharp: the continuity assumption on the right hand side is standard for the viscosity solution setting and \emph{it will be assumed hereafter}. However, we do not impose nor use any sort of modulus of continuity on $f$ and our estimates depend only on its $L^\infty$ norm. 

We finish this introduction with some definitions and preliminary remarks that will be used throughout the paper. As mentioned before, the definition of ellipticity follows naturally once we have the definition of the Pucci extremal operators: 
\begin{defn}\cite{CS1}\label{def.ellip}
Let $\mathcal{L}$ a class of linear integro-differential operators such that any $K\in\mathcal{L}$ satisfies
\[
\int_{\mathbb{R}^n}\frac{|y|^2}{1+|y|^2}K(y)\:dy\leq C<\infty
\]
for some constant $C>0$. 

An elliptic operator $I$ with respect to $\mathcal{L}$ is an operator with the following properties:
\begin{enumerate}

\item if $u$ is a bounded function and $C^{1,1}$ at $x$ then $Iu(x)$ is well defined, 

\item if $u$ is $C^2$ in some open set $\Omega$ (and bounded outside) then $Iu(\cdot)$ is continuous in $\Omega$, 

\item if $u,v$ are to bounded functions, $C^{1,1}$ at $x$, then 
\begin{equation}\label{eq.ellip}
\mathcal{M}^-_{\mathcal{L}}(u-v)(x)\leq Iu(x)-Iv(x)\leq\mathcal{M}^+_\mathcal{L}(u-v)(x).
\end{equation}
\end{enumerate}
\end{defn}  

For these operators, there is a natural notion of viscosity solution given in the definition presented below. Viscosity solutions enjoy, among other properties, good stability behavior in the sense that (appropriate) limits of viscosity solutions are again viscosity solutions, see Lemma \ref{lem.stab}. 
 
\begin{defn}\cite{CS1}\label{def.visc}
Let $I$ be a nonlocal operator, elliptic in the sense Definition \ref{def.ellip} and $f$ a continuous function in $B_1$. A continuous function $u:\mathbb{R}^n\rightarrow\mathbb{R}$ is said to be a viscosity \emph{supersolution} (resp. \emph{subsolution}) of
\[
Iu=f
\]
in $B_1$, also written $Iu\leq f$ (resp. $Iu\geq f$) if the following happens: for any $x_0\in B_1$ and any function $\varphi$ satisfying:   

\begin{itemize}

\item $\varphi(x_0)=u(x_0)$

\item $\varphi(x)<u(x)$ (resp. $\varphi(x)>u(x)$) in some neighborhood $N$ of $x_0$ inside $B_1$

\item $\varphi\in C^2(\overline{N})$

\item $\varphi=u$ in $\mathbb{R}^n\setminus N$
\end{itemize}
we must have
\[
I\varphi(x_0)\leq f(x_0)\quad (\textrm{resp. }I\varphi(x_0)\geq f(x_0)).
\]
\end{defn}

Actually, test functions are not confined to coincide with $u$ outside the neighborhood $N$ but to remain below (or above) it and have a growth that allows the operators to be evaluated at infinity, see the discussion below. 

An important property when proving regularity results is the scaling of the operator. In that direction, it is easy to check that the scaling properties of nonlocal operators continue to hold for viscosity solutions, so that if $u$ satisfies for instance
\[
\mathcal{M}_{\mathcal{L}}^+u(x)\geq f(x)
\]
in the viscosity sense, then $u_r(x):=u(rx)$ satisfies
\[
\mathcal{M}_{\mathcal{L}}^+u_r(x)\geq r^{2s}f(rx)
\]
in the viscosity sense.

We simplify the notation hereafter by setting
\[
\mathcal{M}^+:=\mathcal{M}^+_{\mathcal{L}_0}\quad\textrm{ and }\quad\mathcal{M}^-:=\mathcal{M}^-_{\mathcal{L}_0}
\]
and point out that for this class of kernels the extremal operators have explicit formulae. In fact, it is easy to see that
\begin{equation}\label{eq.form+}
\mathcal{M}^+u(x)=\int\frac{\Lambda\delta(u,x,y)^+-\lambda\delta(u,x,y)^-}{|y|^{n+2s}}\:dy
\end{equation}
and
\begin{equation}\label{eq.form-}
\mathcal{M}^-u(x)=\int\frac{\lambda\delta(u,x,y)^+-\Lambda\delta(u,x,y)^-}{|y|^{n+2s}}\:dy
\end{equation}
where
\[
\delta(u,x,y):=u(x+y)+u(x-y)-2u(x)
\]
and for a real number $a\in\R^n, a^+:=\max\{a,0\},a^-:=-\min\{a,0\}$.

We would also like to point out that operators of the form \eqref{eq.L} actually allow some growth on the functions to be evaluated. Indeed, it is enough to ask for the function $u$ to be integrable against a polynomial that grows like $n+2s$, so that we can require for instance
\[
\int\frac{|u(y)|}{1+|y|^{n+2s}}\:dy<\infty
\]
instead of boundedness in Definition \ref{def.ellip}. Such functions are said to belong to $L^1(\omega_s)$, where 
\[
\omega_s(x):=\frac{1}{1+|x|^{n+2s}}.
\] 
In the sequel, we will use the $L^\infty$ norm and state our results for bounded functions for simplicity, but all the proofs can be easily recast to fit into this more general setting.  

Finally, a word about notation: for $\gamma>0$, we will use the brackets $[\cdot]_{C^\gamma}$ to denote the usual H\"older seminorms. Therefore, if $0<\gamma<1$ and $\Omega$ is an open set
\[
[u]_{C^\gamma(\Omega)}:=\sup_{\substack{x\neq y \\ x,y\in\Omega}}\frac{|u(y)-u(x)|}{|y-x|^\gamma}
\]
and the full norm is defined by 
\[
\|u\|_{C^\gamma(\Omega)}:=\|u\|_{L^\infty(\Omega)}+[u]_{C^\gamma(\Omega)}.
\]

As we will have to deal with cases for which $\gamma$ will be bigger than one (and always non-integer), it simplifies notation to denote by $[u]_{C^\gamma}$ the $C^{\gamma-\lfloor\gamma\rfloor}$ of the derivatives of order $\lfloor\gamma\rfloor$ of $u$ (with $\lfloor a\rfloor$ denoting the integer part of $a$). Hence, if for instance $\gamma\in(1,2)$ then $[u]_{C^\gamma}$ denotes the H\"older  seminorm $[\nabla u]_{C^{\gamma-1}}$ and the full norm is defined by
\[
\|u\|_{C^\gamma(\Omega)}:=\|u\|_{L^\infty(\Omega)}+\|\nabla u\|_{L^\infty(\Omega)}+[\nabla u]_{C^{\gamma-1}(\Omega)}.
\]

The rest of the paper is organized as follows: Section \ref{sec.liou} is concerned with the proof of the Liouville theorem that allows us to classify global solutions; in Section \ref{sec.main}, after some necessary preliminary results, we give the proof of Theorem \ref{thm.c2s}; finally, in Section \ref{sec.two} we give an application to our result that provides a basic regularity estimate for the two membranes problem for fully nonlinear nonlocal operators. 

\section{A Liouville theorem}\label{sec.liou}
  
Liouville type theorems are classification results that express the ``rigidity'' of elliptic equations, in the sense that global solutions with some prescribed growth at infinity have to be polynomials. In this section we prove one such result that will be one of the main ingredients in the proof of Theorem \ref{thm.c2s}. 
 
\begin{thm}\label{thm.liou}
Let $s\in(0,1)$ and let $u$ be a $C^{2,\varepsilon}$ function for some $\varepsilon>0$ satisfying the following:
\begin{enumerate}
\item for any $h\in\mathbb{R}^n$ 
\begin{equation}\label{eq.liou2}
\mathcal{M}^-(u(\cdot+h)-u(\cdot))\leq 0\leq \mathcal{M}^+(u(\cdot+h)-u(\cdot))\quad\textrm{ in }\quad \mathbb{R}^n,
\end{equation}

\item for any $\mu\in L^1(\mathbb{R}^n)$ with compact support and $\int_{\mathbb{R}^n}\mu(h)\: dh=1$ 
\begin{equation}\label{eq.liou3}
\mathcal{M}^+\Big(\int u(\cdot+h)\mu(h)\:dh-u\Big)\geq 0\quad\textrm{ in }\quad \mathbb{R}^n. 
\end{equation}

\end{enumerate}
Assume further that there exists a constant $C>0$ and some $\alpha<2s$ such that
\[
[u]_{C^\alpha(B_R)}\leq CR^\gamma
\]
for all $R\geq 1$ with $\gamma:=2s-\alpha$.

Then 
\[
u(x)=p\cdot x+q
\]
for some $p\in\mathbb{R}^n$ and $q\in\mathbb{R}$ with $p=0$ if $s<1/2$.
\end{thm}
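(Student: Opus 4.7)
My plan is to force $u$ to be so regular that its $(2s+\delta)$-Hölder seminorm vanishes globally, which then pins down $u$ as a polynomial of degree at most one. More precisely, I want to show that for some $\delta=\delta(n,s,\lambda,\Lambda)>0$,
\[
[u]_{C^{2s+\delta}(\R^n)}=0,
\]
so that $u$ is a polynomial of degree strictly less than $2s+\delta$. The classification in the statement then follows by simple arithmetic: if $s<1/2$, pick $\delta$ small so that $2s+\delta<1$ and $u$ must be constant (giving $p=0$); otherwise $2s+\delta\in(1,2)$, $\nabla u$ is constant, and $u(x)=p\cdot x+q$.

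The main ingredient is the Evans--Krylov-type interior regularity estimate of Serra \cite{Se2}, singled out in the introduction as the tool that bridges the gap from the $C^{1,\alpha}$ bounds of \cite{K,Se1} to our sharper result. For us it reads: any $v$ satisfying hypotheses (1) and (2) in $B_1$ (with appropriate tail behaviour at infinity) lies in $C^{2s+\delta}(B_{1/2})$ and
\[
[v]_{C^{2s+\delta}(B_{1/2})} \leq C\bigl(\|v\|_{L^\infty(B_1)}+\textrm{tail}\bigr).
\]
Hypothesis (1) is the extremal-class inequality that any incremental quotient of a fully nonlinear equation satisfies, and hypothesis (2), a Jensen-type inequality for averages of translates, encodes the concavity of the equation, which is precisely what makes the jump beyond $C^{1,\alpha}$ possible.

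With that estimate on the table, the proof is a scaling argument. For each $R\geq 1$ define
\[
u_R(x):=\frac{u(Rx)-u(0)}{R^{2s}}.
\]
Both conditions (1) and (2) are invariant under scaling $x\mapsto Rx$ (because $\mathcal{M}^\pm$ are homogeneous of order $2s$) and under subtracting a constant, so $u_R$ still satisfies them on all of $\R^n$. The growth assumption $[u]_{C^\alpha(B_R)}\leq CR^{2s-\alpha}$ yields $\|u_R\|_{L^\infty(B_1)}\leq C$ uniformly in $R$. Applying the interior estimate to $u_R$ in $B_{1/2}$ produces $[u_R]_{C^{2s+\delta}(B_{1/2})}\leq C$ with $C$ independent of $R$, and unwinding the scaling gives $[u]_{C^{2s+\delta}(B_{R/2})}\leq CR^{-\delta}\to 0$ as $R\to\infty$.

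The main technical obstacle I foresee is the tail of $u_R$ at infinity: the growth $|u(x)-u(0)|\leq C|x|^{2s}$ implied by the hypothesis sits exactly on the threshold of integrability against $|y|^{-n-2s}$, so a naive application of the interior estimate will not produce a uniform tail control. I would bypass this by working instead with first increments $v_h(x):=u(x+h)-u(x)$, whose growth $|v_h(x)|\leq C|h|^\alpha(|x|+|h|)^{2s-\alpha}$ is strictly subcritical---hence genuinely integrable against $\omega_s$---and then passing back to $u$; alternatively, one can subtract an affine correction at each scale, which is permissible because hypotheses (1) and (2) are also invariant under the addition of affine functions. Once the tails are under control, the scheme above concludes in a few lines.
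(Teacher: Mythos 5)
Your proof is correct and follows essentially the same path as the paper: rescale $u$ by $u(Rx)/R^{2s}$, invoke Serra's $C^{\sigma+\alpha}$ Evans--Krylov-type interior estimate from \cite{Se2}, and let $R\to\infty$ to force $[u]_{C^{2s+\delta}(\R^n)}=0$, from which the affine classification (and $p=0$ when $s<1/2$) is immediate. The tail issue you flag is genuine but is already absorbed into the hypotheses of Theorem~2.1 in \cite{Se2}, which is stated precisely for functions satisfying the growth control $[u]_{C^\alpha(B_R)}\leq CR^{2s-\alpha}$ rather than for bounded ones, so the two workarounds you propose are sound but not actually required.
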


\begin{rem}
A comment about hypothesis (2) is in order: the reason why concavity opens up the way for a $C^{2,\alpha}$ estimate in the local (second order) case is that when $F$ is concave the second derivatives of a solution are themselves subsolutions of an equation with bounded measurable coefficients. In our nonlocal setting, the situation is similar in the sense that hypothesis (1)-(2) comprise the relevant information about being a solution to a concave fully nonlinear equation. This was not needed for the lower $C^{1,\alpha}$ regularity estimates of previous works but it is in ours if we assume $2s>1+\alpha$ (see \cite{CS3}, \cite{Se2}).  
\end{rem}

\begin{proof}[Proof of Theorem \ref{thm.liou}.]
As is customary in Liouville type theorems, we want to apply interior estimates to rescalings of $u$ in larger and larger balls. Therefore we define
\[
u_r(x):=\frac{1}{r^{2s}}u(rx)
\]
and the forthcoming estimates are to be considered \emph{a priori} estimates.

Now, $u_r$ satisfies
\[
\mathcal{M}^-(u_r(x+h)-u_r(x))= \mathcal{M}^-(u(r(x+h))-u(rx))\leq 0
\]
owing to \eqref{eq.liou2} and the natural scaling of the nonlocal operators and analogously for $\mathcal{M}^+$ so that  
\[
\mathcal{M}^-(u_r(\cdot+h)-u_r(\cdot))\leq 0\leq \mathcal{M}^+(u_r(\cdot+h)-u_r(\cdot))\quad\textrm{ in }\quad \mathbb{R}^n.
\]
Similarly, it follows that $u_r$ satisfies \eqref{eq.liou3} and 
\[
[u_r]_{C^{\alpha}(B_R)}=\frac{1}{r^{2s}}[u(r\cdot)]_{C^{\alpha}(B_R)}=\frac{1}{r^{2s-\alpha}}[u]_{C^{\alpha}(B_{rR})}\leq CR^\gamma
\]
so the growth also follows. 

Therefore, by virtue of the regularity results in \cite{Se2} (in particular we can repeat the argument in the proof of Theorem 2.1) we have that 
\[
[u_r]_{C^{2s+\tilde{\alpha}}(B_{1/4})}\leq C
\]
for some $\tilde{\alpha}>0$ and some universal constant $C>0$. Scaling back this implies s
\[
r^{\tilde{\alpha}}[u]_{C^{2s+\tilde{\alpha}}(B_{r/4})}\leq C
\]
or
\[
[u]_{C^{2s+\tilde{\alpha}}(B_{r/4})}\leq Cr^{-\tilde{\alpha}}
\]
and if we let $r$ go to $\infty$ we obtain
\[
[u]_{C^{2s+\tilde{\alpha}}(\mathbb{R}^n)}=0.
\]
If $2s+\tilde{\alpha}<2$ this readily implies the desired result. If not we can now use the growth control once more to conclude the proof. 
\end{proof}

\begin{rem}
It is reasonable that we can use Theorem 2.1 in \cite{Se2} for our purposes. Indeed, their solutions ``grow more'' than ours since their aim is to prove higher regularity. The main difference (and constraint for regularity in our case) is the presence of a right hand side, but this is essentially forced to vanish in the limit in our blow up argument (see the proof of Proposition \ref{prop.blow1}).
\end{rem}

\section{Proof of Theorem \ref{thm.c2s}}\label{sec.main}

In this section we present the proof of Theorem \ref{thm.c2s}. Its main ingredients are going to be Propositions \ref{prop.blow1} (for part a)) and  \ref{prop.blow2} (for part b)). Their proofs, in turn, consist on performing a sequence of blow ups with the correct normalization in order to obtain a solution to a homogeneous equation in the whole space whose solutions can be classified via the Liouville theorem \ref{thm.liou} and arrive from there to a contradiction. We start with the following technical lemma that allows us to take limits:

\begin{lem}\label{lem.stab}
Let $\{w_m\}_m$ be a sequence of bounded continuous functions in $\overline B_R$ for some $R>0$. Assume that 

\begin{enumerate}

\item there exist constants $C_m$ such that
\[
\mathcal{M}^+w_m\geq -C_m\quad\textrm{ and }\quad\mathcal{M}^-w_m \leq C_m
\]
in the viscosity sense in $B_R$,

\item there exists a function $w\in C(\mathbb{R}^n)\cap L^1(\omega_s)$ such that
\[
w_m\longrightarrow w
\]
uniformly in $\overline B_R$ and 
\[
\int_{\mathbb{R}^n}\frac{|w_m(x)-w(x)|}{1+|x|^{n+2s}}\:dx\longrightarrow 0
\]
and
\[
C_m\longrightarrow 0
\] 
as $m\rightarrow\infty$.
\end{enumerate}

Then $w$ is a viscosity solution of
\begin{equation}\label{eq.lim}
\mathcal{M}^+w\geq 0 \quad\textrm{ and }\quad\mathcal{M}^-w \leq 0
\end{equation}
in $B_R$.
\end{lem}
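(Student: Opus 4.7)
The claim is a standard stability result for viscosity inequalities involving extremal operators. I will prove the subsolution inequality $\mathcal{M}^+ w \geq 0$; the argument for $\mathcal{M}^- w \leq 0$ is entirely analogous (it can also be reduced to the first by replacing $w_m$ with $-w_m$ and using $\mathcal{M}^-(-u)=-\mathcal{M}^+ u$).

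Fix $x_0 \in B_R$ and a test function $\varphi$ from Definition \ref{def.visc} touching $w$ from above at $x_0$, i.e. $\varphi(x_0)=w(x_0)$, $\varphi>w$ in $N\setminus\{x_0\}$ for some neighborhood $N\Subset B_R$, $\varphi\in C^2(\overline{N})$, and $\varphi=w$ in $\mathbb{R}^n\setminus N$. The goal is to show $\mathcal{M}^+\varphi(x_0)\geq 0$. The plan is to perturb $\varphi$ slightly, glue it with $w_m$ outside $N$ to create an admissible test function for $w_m$, invoke hypothesis (1), and then pass to the limit.

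Concretely, set $c_m:=\inf_{\overline{N}}(\varphi-w_m)$. Since $\varphi-w\geq 0$ on $\overline{N}$ with a strict minimum only at $x_0$, and since $w_m\to w$ uniformly on $\overline{B}_R$, a standard argument shows that $c_m\to 0$, that $c_m$ is attained at some $x_m\in N$, and that (along a subsequence) $x_m\to x_0$. Define
\[
\tilde{\varphi}_m(x):=\begin{cases} \varphi(x)-c_m & x\in N,\\ w_m(x) & x\in\mathbb{R}^n\setminus N.\end{cases}
\]
Then $\tilde{\varphi}_m$ touches $w_m$ from above at $x_m$ in the sense required by Definition \ref{def.visc}, so hypothesis (1) gives $\mathcal{M}^+\tilde{\varphi}_m(x_m)\geq -C_m$. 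The task reduces to showing
\[
\mathcal{M}^+\tilde{\varphi}_m(x_m)\;\longrightarrow\;\mathcal{M}^+\varphi(x_0)
\]
as $m\to\infty$, after which letting $C_m\to 0$ finishes the proof.

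To pass to the limit I would split the integral defining $\mathcal{M}^+$ (via the explicit formula \eqref{eq.form+}) into the region $\{|y|<\rho\}$ and its complement, where $\rho$ is chosen so that $B_{2\rho}(x_0)\subset N$. In the inner region, $\tilde{\varphi}_m$ coincides with $\varphi-c_m$ on $B_{2\rho}(x_0)$, so $\delta(\tilde{\varphi}_m,x_m,y)=\delta(\varphi,x_m,y)$ is controlled pointwise by $C|y|^2\|\varphi\|_{C^2(\overline{B_{2\rho}(x_0)})}$; this yields convergence to $\delta(\varphi,x_0,y)$ by dominated convergence together with continuity of $D^2\varphi$ and $x_m\to x_0$. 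In the outer region the integrand decays like $|y|^{-n-2s}$ against $\tilde\varphi_m$, and here the weighted $L^1(\omega_s)$ convergence in hypothesis (2) is exactly what is needed: after a harmless translation $z=x_m\pm y$ (shifting weights within a bounded factor) it shows that $\int_{|y|\geq\rho}|\tilde{\varphi}_m(x_m\pm y)-\varphi(x_0\pm y)||y|^{-n-2s}\,dy\to 0$, using also that $\varphi=w$ outside $N$ to identify the limit. The term $-2\tilde{\varphi}_m(x_m)=-2(w_m(x_m)+c_m)$ tends to $-2\varphi(x_0)$ by uniform convergence.

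The main obstacle is precisely this passage to the limit in the nonlocal tail: unlike the second-order case, the values of $\tilde{\varphi}_m$ far from $x_m$ enter into $\mathcal{M}^+\tilde{\varphi}_m(x_m)$, and the center of integration moves with $m$. This is exactly why the hypothesis is stated not just as uniform convergence on $\overline{B}_R$ but also as convergence in $L^1(\omega_s)$; once this is exploited as above, the argument is routine.
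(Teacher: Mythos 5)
Your proof follows essentially the same route as the paper's: perturb the test function by a vertical shift, glue it with $w_m$ outside $N$, touch $w_m$ at a nearby point $x_m\to x_0$, invoke hypothesis (1), and pass to the limit in the explicit formula \eqref{eq.form+} by splitting into a near region (handled via the $C^2$ bound on $\varphi$ and dominated convergence) and a far region (handled via the $L^1(\omega_s)$ convergence). Your $c_m=\inf_{\overline N}(\varphi-w_m)$ is the paper's $a_m$ up to sign, and the rest is the same argument spelled out in slightly more detail, in particular the remark that the moving base point $x_m$ is harmless after a bounded translation of the weight.
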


\begin{proof}
We will prove the first inequality in \eqref{eq.lim}, the other one is completely analogous. Recall Definition \ref{def.visc} of viscosity solution and consider $x\in B_R$ and a function $\varphi$ that is $C^2$ in some neighborhood $N$ of $x$ (contained in $B_R$). Moreover, suppose $\varphi$ touches $w$ by above at $x$ in $N$, i.e.
\[
\varphi(x)=w(x)\quad\textrm{ and }\quad \varphi(y)>w(y)\quad\textrm{ in }\quad N\setminus\{x\}
\] 
and let 
\[
v(x):=
\left\{
\begin{array}{ccc}
\varphi(x) & \textrm{ if } & x\in N \\
w(x) & \textrm{ if } & x\in \mathbb{R}^n\setminus N 
\end{array}
\right.
\]
Because of the uniform convergence of $\{w_m\}_m$ there exist a sequence of points $\{x_m\}_m\subset B_R$ and a sequence of numbers $\{a_m\}_m$ such that
\begin{enumerate}

\item $x_m\longrightarrow x$ as $m\rightarrow\infty$

\item $a_m\longrightarrow 0$ as $m\rightarrow\infty$

\item $\varphi+a_m$ touches $w_m$ by a above at $x_m$ in $N$ for (all) sufficiently large $m$.

\end{enumerate} 

Therefore, if we define
\[
v_m(x):=
\left\{
\begin{array}{ccc}
\varphi(x)+a_m & \textrm{ if } & x\in N \\
w_m(x) & \textrm{ if } & x\in \mathbb{R}^n\setminus N 
\end{array}
\right.
\]
we have, by hypothesis, 
\begin{equation}\label{eq.lim2}
\mathcal{M}^+v_m(x_m)\geq -C_m.
\end{equation}

Now we use the explicit formula for $\mathcal{M}^+$ given in \eqref{eq.form+}:
\begin{align*}
\mathcal{M}^+v_m(x_m) & = \int_N\frac{\Lambda\delta(\varphi,x_m,y)^+-\lambda\delta(\varphi,x_m,y)^-}{|y|^{n+2s}}\:dy \\
					  & + \int_{\mathbb{R}^n\setminus N}\frac{\Lambda\delta(u_m,x_m,y)^+-\lambda\delta(u_m,x_m,y)^-}{|y|^{n+2s}}\:dy. \\
\end{align*}

Now 
\[
\int_N\frac{\Lambda\delta(\varphi,x_m,y)^+-\lambda\delta(\varphi,x_m,y)^-}{|y|^{n+2s}}\:dy \longrightarrow\int_N\frac{\Lambda\delta(\varphi,x,y)^+-\lambda\delta(\varphi,x,y)^-}{|y|^{n+2s}}\:dy
\]
as $m\rightarrow\infty$ by continuity (recall $\varphi\in C^2$) whereas $\int_{\mathbb{R}^n\setminus N}\frac{\Lambda\delta(u_m,x_m,y)^+-\lambda\delta(u_m,x_m,y)^-}{|y|^{n+2s}}\:dy$ converges to $\int_{\mathbb{R}^n\setminus N}\frac{\Lambda\delta(u,x_m,y)^+-\lambda\delta(u,x_m,y)^-}{|y|^{n+2s}}\:dy$ as $m\rightarrow\infty$ by hypothesis b) and since that the right hand side of \eqref{eq.lim2} converges to 0 we get
\[
\mathcal{M}^+v(x)\geq 0
\]
which is what we wanted to prove.
\end{proof}

The next two Lemmas are concerned with a similar stability property, but they deal with a sequence of different operators, therefore we need to define a notion of convergence of operators which is the following (see \cite{CS2}):
\begin{defn}
Let $\{I_m\}_m$ be a sequence of translation invariant operators of order $2s$ belonging to some fixed ellipticity class and $I$ be an operator in such a class. We say that $I_m$ converges weakly to $I$ if for any $\varepsilon>0$ and for any test function $w$ which is a quadratic polynomial in $B_\varepsilon$ and belongs to $L^1(\omega_s)$ we have
\[
I_mw\longrightarrow Iw\quad\textrm{ uniformly in }\quad \overline{B_{\varepsilon/2}}.
\]
\end{defn} 

Actually, the ellipticity class needs not to be fixed, but for our purposes that case is enough. The same remark holds for the following Lemma, that shows that this type of convergence has the advantage of enjoying a good compactness property. This is Theorem 42 in \cite{CS2}:

\begin{lem}\label{lem.stab2}
If $\{I_m\}_m$ is any sequence of translation invariant operators of order $2s$ belonging to the same ellipticity class and satisfying $I_m0=0$ for all $m$. Then there exists an elliptic operator $I$ belonging to the same ellipticity class such that, up to a subsequence, $I_m$ converges weakly to $I$. 
\end{lem}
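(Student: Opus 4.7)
The plan is to combine the Pucci bounds in \eqref{eq.ellip} with an Arzel\`a--Ascoli/diagonalization argument.

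First I would show that for every admissible test function $w$ (by which I mean a quadratic polynomial on some $B_\varepsilon$ that also belongs to $L^1(\omega_s)$) the sequence $\{I_m w\}$ is uniformly bounded and equicontinuous on $\overline{B_{\varepsilon/2}}$. Boundedness at a point $x_0\in\overline{B_{\varepsilon/2}}$ comes from $I_m 0=0$ and \eqref{eq.ellip} applied to the pair $(w,0)$, which pinches $I_m w(x_0)$ between $\mathcal{M}^-_{\mathcal{L}_0}w(x_0)$ and $\mathcal{M}^+_{\mathcal{L}_0}w(x_0)$; by the explicit formulas \eqref{eq.form+}--\eqref{eq.form-} these extremal values are finite and controlled by $\|D^2w\|_{L^\infty(B_\varepsilon)}+\|w\|_{L^1(\omega_s)}$. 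Equicontinuity follows from translation invariance: $I_m w(x)-I_m w(x')=I_m(w(\cdot+x))(0)-I_m(w(\cdot+x'))(0)$ is squeezed, via \eqref{eq.ellip}, between $\mathcal{M}^\pm_{\mathcal{L}_0}(w(\cdot+x)-w(\cdot+x'))(0)$, and for such $w$ this modulus tends to zero as $|x-x'|\to 0$ at a rate depending only on $w$ and on $n,s,\lambda,\Lambda$.

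Second, for each rational $\varepsilon>0$ I would choose a countable family $\mathcal{D}_\varepsilon$ of such test functions that is dense in the natural topology (the $C^2$ norm on $\overline{B_\varepsilon}$ combined with the $L^1(\omega_s)$ norm on the complement), set $\mathcal{D}:=\bigcup_\varepsilon \mathcal{D}_\varepsilon$, and apply Arzel\`a--Ascoli followed by a diagonal extraction to produce a single subsequence $\{I_{m_k}\}$ along which $I_{m_k}w$ converges uniformly on the corresponding $\overline{B_{\varepsilon/2}}$ for every $w\in\mathcal{D}$. For an arbitrary admissible $w$ I would approximate it by $w_j\in\mathcal{D}$ in the above norm; the Pucci inequality bounds $|I_{m_k}w-I_{m_k}w_j|$ on $\overline{B_{\varepsilon/2}}$ by a quantity that is uniformly small in $k$ as $j\to\infty$, so $\{I_{m_k}w\}$ is uniformly Cauchy and its limit is independent of the approximation.

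I would then define $Iw(x):=\lim_k I_{m_k}w(x)$, first on the quadratic class (this already yields weak convergence in the stated sense) and then, by the same density argument, on the full class of bounded functions that are $C^{1,1}$ at $x$; this last step is what turns $I$ into an elliptic operator in the sense of Definition \ref{def.ellip}. Translation invariance, the normalization $I0=0$, and the two-sided bound \eqref{eq.ellip} with the same ellipticity constants all persist in the limit. The main obstacle I expect is this last step: verifying that the limiting object is a \emph{bona fide} operator of Definition \ref{def.ellip} rather than merely a prescription on a dense family — in particular property (2) of that definition (continuity of $Iu$ on open sets where $u$ is $C^2$) requires combining the uniform equicontinuity from the first step with the uniform Pucci estimate to transfer continuity from the $I_{m_k}$'s to $I$. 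All of this hinges on the single observation that the Pucci bounds are uniform in $m$, which is built into the hypothesis of a common ellipticity class.
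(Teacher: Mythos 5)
The paper does not prove this lemma but cites it directly as Theorem~42 of \cite{CS2}; your argument reconstructs essentially the proof given there. The key ingredients you use --- the two-sided Pucci pinching via \eqref{eq.ellip} combined with $I_m0=0$ to get uniform bounds, translation invariance to convert continuity in $x$ into a Pucci estimate on increments, Arzel\`a--Ascoli plus a diagonal extraction over a countable dense family of test functions, and the final density argument to verify that the limit is a genuine elliptic operator in the sense of Definition~\ref{def.ellip} --- match the structure of the Caffarelli--Silvestre argument, and you correctly flag the last step as the delicate one.
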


Finally, with the aid of the previous two lemmas it is immediate to obtain the following:

\begin{lem}\label{lem.stab3}
Let $\{w_m\}_m$ be a sequence of bounded continuous functions in $\overline B_R$ for some $R>0$. Assume that 

\begin{enumerate}

\item 
\[
I_mw_m=f_m\quad\textrm{ in } B_R
\]
in the viscosity sense in $B_R$ with $f_m$ a continuous and bounded function in $B_R$,

\item there exists a function $w\in C(\mathbb{R}^n)\cap L^1(\omega_s)$ such that
\[
w_m\longrightarrow w
\]
uniformly in $\overline B_R$ and 
\[
\int_{\mathbb{R}^n}\frac{|w_m(x)-w(x)|}{1+|x|^{n+2s}}\:dx\longrightarrow 0,
\]

\item there exists and operator $I$ such that $I_m$ converges weakly to $I$ 

\item 
\[
f_m\longrightarrow0\quad\textrm{ uniformly in } B_R.
\] 
\end{enumerate}

Then $w$ is a viscosity solution of
\[
Iw=0
\]
in $B_R$.
\end{lem}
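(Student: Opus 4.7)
The plan is to prove that $w$ is simultaneously a viscosity super- and subsolution of $Iw=0$ in $B_R$; by symmetry it suffices to treat the supersolution case. Fix $x_0\in B_R$ and a $C^2$ test function $\varphi$ on $\overline N\subset B_R$ touching $w$ from below strictly at $x_0$, extended by $w$ outside $N$; denote the global object by $\tilde\varphi$. The goal is $I\tilde\varphi(x_0)\leq 0$. Using the uniform convergence $w_m\to w$ on $\overline B_R$ together with strict touching, standard arguments produce sequences $x_m\to x_0$ and $a_m\to 0$ such that
\[
\tilde\varphi_m := (\varphi+a_m)\mathbf{1}_N + w_m\mathbf{1}_{\mathbb{R}^n\setminus N}
\]
touches $w_m$ from below at $x_m$ in $N$, and Definition \ref{def.visc} together with hypothesis~(1) gives $I_m\tilde\varphi_m(x_m)\leq f_m(x_m)$. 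Since $f_m(x_m)\to 0$ by (4), it suffices to show $I_m\tilde\varphi_m(x_m)\to I\tilde\varphi(x_0)$.

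I would split this convergence in two. First, the ellipticity relation \eqref{eq.ellip} applied to $I_m$ yields
\[
\bigl|I_m\tilde\varphi_m(x_m)-I_m\tilde\varphi(x_m)\bigr|\leq \max\bigl(\mathcal{M}^+,-\mathcal{M}^-\bigr)(\tilde\varphi_m-\tilde\varphi)(x_m).
\]
The difference $\tilde\varphi_m-\tilde\varphi$ equals $a_m$ in $N$ and $w_m-w$ outside $N$, so splitting the integrals \eqref{eq.form+}--\eqref{eq.form-} into $y$ small (where for $m$ large $x_m\pm y\in N$ and the symmetric difference $\delta$ vanishes) and $y$ large (controlled by $\|w_m-w\|_{L^\infty(B_R)}$ and $\|w_m-w\|_{L^1(\omega_s)}$) shows that this bound tends to $0$ by hypothesis (2); this is precisely the mechanism underlying the proof of Lemma \ref{lem.stab}.

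Second, to obtain $I_m\tilde\varphi(x_m)\to I\tilde\varphi(x_0)$, fix $\varepsilon>0$ with $\overline{B_\varepsilon(x_0)}\subset N$, let $P$ be the second-order Taylor polynomial of $\varphi$ at $x_0$, and set $\tilde\varphi^\varepsilon := P\mathbf{1}_{B_\varepsilon(x_0)} + \tilde\varphi\mathbf{1}_{\mathbb{R}^n\setminus B_\varepsilon(x_0)}$. Since $\tilde\varphi^\varepsilon$ is a quadratic polynomial on $B_\varepsilon(x_0)$ and lies in $L^1(\omega_s)$, translation invariance combined with hypothesis (3) gives $I_m\tilde\varphi^\varepsilon\to I\tilde\varphi^\varepsilon$ uniformly on $\overline{B_{\varepsilon/2}(x_0)}$, hence $I_m\tilde\varphi^\varepsilon(x_m)\to I\tilde\varphi^\varepsilon(x_0)$ for $m$ large. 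The truncation errors $|I_m\tilde\varphi(x_m)-I_m\tilde\varphi^\varepsilon(x_m)|$ and $|I\tilde\varphi(x_0)-I\tilde\varphi^\varepsilon(x_0)|$ are once more bounded by extremal operators acting on $\tilde\varphi-\tilde\varphi^\varepsilon$, which is supported in $B_\varepsilon(x_0)$ and equal there to $\varphi-P=o(|x-x_0|^2)$; the explicit formula \eqref{eq.form+} produces a bound $\omega(\varepsilon)\to 0$ that is \emph{independent of the operator in the ellipticity class}. Letting first $m\to\infty$ for fixed $\varepsilon$ and then $\varepsilon\to 0$ gives $I_m\tilde\varphi(x_m)\to I\tilde\varphi(x_0)$, and combining with the first stage yields $I\tilde\varphi(x_0)\leq 0$. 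The subsolution direction is completely analogous, so $Iw=0$ in the viscosity sense in $B_R$.

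The main obstacle is the operator-independent truncation estimate in the third paragraph: the weak convergence hypothesis~(3) only bridges operators through functions that are quadratic in a ball, so one must interpolate between the smooth tip of $\tilde\varphi$ and a quadratic approximation while controlling the resulting error uniformly over all $I_m$. That uniformity is exactly what the extremal operators $\mathcal{M}^\pm$ (and the argument of Lemma \ref{lem.stab}) supply, which is why this stability lemma is ``immediate'' once the previous two are in hand.
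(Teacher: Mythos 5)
Your proposal is correct and follows the same strategy the paper invokes by reference to Lemma 5 of Caffarelli--Silvestre (cited as \cite{CS2}): perturb the test function to touch $w_m$ at nearby points, then pass to the limit by combining a Lemma~\ref{lem.stab}-style tail estimate for the changing data $w_m\to w$ with a quadratic truncation near $x_0$ so that the weak convergence of $I_m$ to $I$ can be exploited, the truncation error being controlled uniformly over the ellipticity class via $\mathcal{M}^\pm$. Your two-stage decomposition is a faithful and somewhat more detailed reconstruction of exactly that argument.
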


\begin{proof}
The proof is the same as Lemma 5 in \cite{CS2} and consists on combining an argument like the one in the proof of Lemma \ref{lem.stab} with the weak convergence to control the behavior at infinity. 
\end{proof}

We are now in position to take the main step towards the proof of Theorem \ref{thm.c2s}, which is contained in Propositions \ref{prop.blow1} and \ref{prop.blow2} depending on whether $s\neq1/2$ or $s=1/2$. We start with the former case:

\begin{prop}\label{prop.blow1}
Let $s\in(0,1)$, $s\neq1/2$ and let $I$ be nonlocal operator of the form \eqref{eq.L1}-\eqref{eq.ker}. Let $u$ be a bounded viscosity solution of
\[
Iu=f\quad\textrm{ in }\quad B_1.
\] 
Assume $f\in L^\infty(B_1)$ and $u\in C^\alpha(\mathbb{R}^n)$ with $\lfloor 2s\rfloor<\alpha<2s$. 

Then $u\in C^{2s}(B_{1/2})$ and
\begin{equation}\label{eq.c2s}
[u]_{C^{2s}(B_{1/2})}\leq C(\|f\|_{L^\infty(B_1)}+[u]_{C^\alpha(\mathbb{R}^n)})
\end{equation}
where $C$ is a constant depending only on $s,n,\lambda,\Lambda$ and $\alpha$. 
\end{prop}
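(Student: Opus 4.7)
I would prove the proposition by a blow-up / compactness argument, in the spirit of Serra's $C^{1,\alpha}$ strategy and of \cite{RS1}, with the new ingredient being that the Liouville classification of the blow-up is carried out with Theorem \ref{thm.liou} (requiring the concavity-type hypothesis (2)) rather than the cruder Liouville that sufficed for lower regularity. Suppose \eqref{eq.c2s} fails: then there exist operators $I_k$ of the form \eqref{eq.L1}-\eqref{eq.ker}, continuous right hand sides $f_k$ with $\|f_k\|_{L^\infty(B_1)}\le 1$, and viscosity solutions $u_k$ of $I_k u_k=f_k$ in $B_1$ with $[u_k]_{C^\alpha(\mathbb R^n)}\le 1$, such that $[u_k]_{C^{2s}(B_{1/2})}\to\infty$. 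Letting $\mathcal P$ denote the polynomials of degree $\lfloor 2s\rfloor$ (constants if $s<1/2$, affine if $s>1/2$), introduce the nonincreasing quantity
\[
\Theta_k(\rho):=\sup_{r\ge\rho}\,\sup_{x_0\in B_{1/2}}\,\frac{1}{r^{2s}}\,\inf_{p\in\mathcal P}\|u_k-p\|_{L^\infty(B_r(x_0))}.
\]
An interior characterization of the $C^{2s}$ seminorm via polynomial approximation gives $\sup_k\Theta_k(\rho)\to\infty$ as $\rho\to 0$, so one can pick $k_m$, $r_m\downarrow 0$, $x_m\in B_{1/2}$ and a best $L^\infty$-approximant $p_m\in\mathcal P$ with
\[
\frac{\|u_{k_m}-p_m\|_{L^\infty(B_{r_m}(x_m))}}{r_m^{2s}}\ge\tfrac12\,\Theta_{k_m}(r_m)=:M_m/2,\qquad M_m\to\infty,
\]
and define the rescaling $v_m(y):=(u_{k_m}-p_m)(x_m+r_m y)/(M_m\,r_m^{2s})$.

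By construction $\|v_m\|_{L^\infty(B_1)}\ge 1/2$. The key step is a growth estimate: exploiting the monotonicity of $\Theta_{k_m}$ and comparing $p_m$ with the best approximant of $u_{k_m}$ on each dyadic ball $B_{2^j r_m}(x_m)$ yields
\[
\|v_m\|_{L^\infty(B_R)}\le C R^{2s}\qquad\text{for every }R\ge 1,
\]
which combined with $[u_k]_{C^\alpha(\mathbb R^n)}\le 1$ gives local equicontinuity. Up to a subsequence $v_m\to v$ locally uniformly in $\mathbb R^n$ with the same growth bound. Since every $L\in\mathcal L_0$ annihilates elements of $\mathcal P$, and by the scaling properties of nonlocal operators, $v_m$ is a viscosity solution of
\[
\tilde I_m v_m=\tilde f_m\qquad\text{in }B_{1/(2r_m)},
\]
with $\tilde I_m$ translation invariant and of the form $\inf_\alpha L_\alpha$ in the class $\mathcal L_0$, and $\tilde f_m(y)=f_{k_m}(x_m+r_m y)/M_m$, so $\|\tilde f_m\|_\infty\le 1/M_m\to 0$. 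Lemmas \ref{lem.stab}, \ref{lem.stab2} and \ref{lem.stab3} then give, along a further subsequence, weak convergence $\tilde I_m\to I_\infty$ with $I_\infty$ still of the form \eqref{eq.L1} in $\mathcal L_0$, and $I_\infty v=0$ in $\mathbb R^n$.

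It remains to verify the two hypotheses of Theorem \ref{thm.liou} for $v$. Hypothesis (1) is immediate: since $I_\infty$ is translation invariant, $v(\cdot+h)$ is also a solution of $I_\infty w=0$, so \eqref{eq.ellip} gives $\mathcal M^-(v(\cdot+h)-v)\le 0\le\mathcal M^+(v(\cdot+h)-v)$. For hypothesis (2), concavity of $I_\infty=\inf_\alpha L_\alpha$ together with Jensen's inequality gives, for every $\mu\in L^1$ with compact support and unit mass,
\[
I_\infty\!\Bigl(\int v(\cdot+h)\mu(h)\,dh\Bigr)\ge\int I_\infty v(\cdot+h)\,\mu(h)\,dh=0=I_\infty v,
\]
and the second half of \eqref{eq.ellip} delivers \eqref{eq.liou3}. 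Finally, the growth $\|v\|_{L^\infty(B_R)}\le C R^{2s}$ combined with the higher interior regularity of \cite{Se2} for concave operators supplies both the $C^{2,\varepsilon}$ smoothness of $v$ and the quantitative bound $[v]_{C^\alpha(B_R)}\le CR^{2s-\alpha}$ required by Theorem \ref{thm.liou}. That theorem then forces $v\in\mathcal P$, contradicting the fact that the best-approximation property of $p_m$ passes to the limit as $\inf_{p\in\mathcal P}\|v-p\|_{L^\infty(B_1)}\ge c>0$.

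The main technical obstacle is the polynomial book-keeping behind the growth estimate $\|v_m\|_{L^\infty(B_R)}\le CR^{2s}$: the rescaled polynomials $p_m$ may acquire large (especially linear, when $s>1/2$) coefficients, and one has to compare best approximants at nested scales while using the monotonicity of $\Theta_{k_m}$ to absorb the mismatch. A secondary but essential point is the preservation of concavity in the limit, which rests on Lemma \ref{lem.stab2}: a weak limit of operators of the form $\inf_\alpha L_\alpha$ is again of this form, which is precisely what makes hypothesis (2) of the Liouville theorem available and, in turn, is what distinguishes the present $C^{2s}$ argument from the $C^{1,\alpha}$ one of \cite{Se1}.
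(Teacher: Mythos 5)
Your overall strategy (blow-up with polynomial subtraction, classification via Theorem~\ref{thm.liou}, contradiction with a passed-to-the-limit nondegeneracy) matches the paper's, but your normalization is genuinely different and that difference opens a real gap. You normalize by the $L^\infty$-based quantity
\[
\Theta_k(\rho)=\sup_{r\ge\rho}\sup_{x_0}r^{-2s}\inf_{p\in\mathcal P}\|u_k-p\|_{L^\infty(B_r(x_0))},
\]
and claim the limit $v$ satisfies the hypothesis $[v]_{C^\alpha(B_R)}\le CR^{2s-\alpha}$ of Theorem~\ref{thm.liou} because of ``interior regularity of \cite{Se2}.'' That step fails. First, with your normalization the only a priori H\"older control on $v_m$ is
\[
[v_m]_{C^\alpha(B_R)}=\frac{[u_{k_m}]_{C^\alpha(B_{Rr_m}(x_m))}}{M_m r_m^{2s-\alpha}}\le\frac{1}{M_m r_m^{2s-\alpha}},
\]
and by $[u_k]_{C^\alpha}\le 1$ one only gets $M_m r_m^{2s-\alpha}\le C$, not a lower bound; so this quantity can blow up and gives nothing uniform. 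Second, you cannot recover the $C^\alpha$-growth from the $L^\infty$ growth $\|v\|_{L^\infty(B_R)}\le CR^{2s}$ by invoking interior estimates for the global solution, because that growth is \emph{borderline non-integrable} against the weight $\omega_s$: $\int_{|y|>1}|y|^{2s}|y|^{-(n+2s)}\,dy$ diverges logarithmically, so the tail term in any interior estimate is infinite as written. (The paper faces no such obstruction: the Liouville hypotheses only involve increments $v(\cdot+h)-v(\cdot)$, and those grow like $|x|^{2s-\alpha}$, which is $\omega_s$-integrable.) So as written you cannot invoke Theorem~\ref{thm.liou}.

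The paper's proof avoids both this gap and the ``polynomial book-keeping'' you flag as your main obstacle by normalizing with a $C^\alpha$-\emph{seminorm}-based quantity,
\[
\theta(r)=\sup_k\sup_z\sup_{r'>r}(r')^{-(2s-\alpha)}[u_k]_{C^\alpha(B_{r'}(z))},
\]
and subtracting the $L^2$-best-fit polynomial $p_m$ of degree $\le\lfloor 2s\rfloor<\alpha$. Because such a polynomial is annihilated by the $C^\alpha$ seminorm (which for $\alpha>1$ is $[\nabla\cdot]_{C^{\alpha-1}}$), the bound $[v_m]_{C^\alpha(B_R)}\le R^{2s-\alpha}$ drops out of the definition of $\theta$ and its monotonicity with no telescoping at all, and it is precisely the growth hypothesis Theorem~\ref{thm.liou} needs. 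The $L^2$-orthogonality of $p_m$ also gives a clean nondegeneracy $\int_{B_1}v p=0$ that passes to the limit easily. To repair your argument you would need to either switch to this seminorm-based $\theta$, or prove a separate lemma upgrading your $L^\infty$-growth to the required $C^\alpha$-growth for blow-up limits of concave operators with compatible tails --- the latter is possible but is a nontrivial additional step that your write-up elides.
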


The analogous result for $s=1/2$ is the following:

\begin{prop}\label{prop.blow2}
Let $s=1/2$ and $I$ be a convex (or concave) translation invariant nonlocal operator elliptic in the sense of \eqref{eq.ellip}-\eqref{eq.ker}. Let $u$ be a bounded viscosity solution 
\[
Iu=f\quad\textrm{ in }\quad B_1.
\] 

Assume $f\in L^\infty(B_1)$ and $u\in C^\alpha(\mathbb{R}^n)$ with $\lfloor 2s\rfloor<\alpha<2s$. 

Then for any $\varepsilon>0$, $u\in C^{2s-\varepsilon}(B_{1/2})$ and
\begin{equation}\label{eq.c2sep}
[u]_{C^{2s-\varepsilon}(B_{1/2})}\leq C(\|f\|_{L^\infty(B_1)}+[u]_{C^\alpha(\mathbb{R}^n)})
\end{equation}
where $C$ is a constant depending only on $s,n,\lambda,\Lambda,\alpha$ and $\varepsilon$. 
\end{prop}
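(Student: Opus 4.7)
My plan mirrors the proof of Proposition \ref{prop.blow1}: a blow-up/compactness argument that produces a global solution, which Theorem \ref{thm.liou} then forces to be trivial. The only new feature compared with the case $s\neq 1/2$ is that for $s=1/2$ the Liouville classification still permits linear terms $p\cdot x+q$, so the normalization must be set up so that the blow-up limit has growth strictly less than $|x|$; this is exactly what the extra $-\varepsilon$ in the target exponent $C^{2s-\varepsilon}$ buys. Assume for contradiction that \eqref{eq.c2sep} fails; normalizing, I obtain sequences $u_k,f_k$ and operators $I_k$ of the form \eqref{eq.L1}--\eqref{eq.ker} with $I_k u_k=f_k$ in $B_1$, $\|f_k\|_{L^\infty(B_1)}+[u_k]_{C^\alpha(\mathbb{R}^n)}\leq 1$, yet $[u_k]_{C^{2s-\varepsilon}(B_{1/2})}\to\infty$.

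Because $2s-\varepsilon<1$, I measure oscillation via the best constant approximation and introduce the non-increasing quantity
\[
\theta_k(r) := \sup_{x_0\in B_{1/2}}\,\sup_{r'\geq r}\,\frac{1}{(r')^{2s-\varepsilon}}\,\inf_{c\in\mathbb{R}}\,\|u_k-c\|_{L^\infty(B_{r'}(x_0))},
\]
which tends to $+\infty$ as $r\to 0^+$ for each sufficiently large $k$. I then pick $r_k\to 0$ and $x_k\in B_{1/2}$ that (up to a factor of two) attain $\theta_k(r_k)=:M_k\to\infty$, let $c_k$ be the best constant approximation of $u_k$ on $B_{r_k}(x_k)$, and rescale
\[
v_k(x) := \frac{u_k(x_k+r_k x) - c_k}{r_k^{2s-\varepsilon}\,M_k}.
\]
The near-maximality at $(x_k,r_k)$ yields the nondegeneracy $\inf_c\|v_k-c\|_{L^\infty(B_1)}\geq 1/2$ together with the sublinear growth $\inf_c\|v_k-c\|_{L^\infty(B_R)}\leq CR^{2s-\varepsilon}$ for every $R\geq 1$. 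The $C^\alpha$ assumption on $u_k$ combined with $M_k\to\infty$ produces local equicontinuity of $v_k$, while the rescaled source $\tilde f_k := r_k^{\varepsilon}M_k^{-1}f_k(x_k+r_k\,\cdot\,)$ tends to zero uniformly on compact sets.

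By Arzel\`a--Ascoli a subsequence of $v_k$ converges locally uniformly, and by the $R^{2s-\varepsilon}$ growth also in $L^1(\omega_s)$, to some $v\in C(\mathbb{R}^n)$ that inherits both the nondegeneracy and the growth. Lemma \ref{lem.stab2} supplies, along a further subsequence, weak convergence of the rescaled operators $\tilde I_k$ to an elliptic operator $I_\infty$, and Lemma \ref{lem.stab3} then delivers $I_\infty v=0$ in $\mathbb{R}^n$ in the viscosity sense. The concavity of each $I_k$, applied to the incremental quotients $u_k(\cdot+h)-u_k(\cdot)$ and to convex combinations $\int u_k(\cdot+h)\mu(h)\,dh - u_k$, combined with the ellipticity relation \eqref{eq.ellip}, yields the pair of inequalities required by hypotheses (1) and (2) of Theorem \ref{thm.liou} with error terms that vanish in the blow-up limit; Lemma \ref{lem.stab} transports these inequalities to $v$. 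Theorem \ref{thm.liou} thus applies with $\gamma=\varepsilon$ and some admissible $\alpha'<2s$ and forces $v(x)=p\cdot x+q$; the sublinear growth $\inf_c\|v-c\|_{L^\infty(B_R)}\leq CR^{1-\varepsilon}$ then forces $p=0$, contradicting the nondegeneracy in $B_1$. The main technical obstacle is propagating the near-maximality of $\theta_k$ at $(x_k,r_k)$ to every larger scale with constants independent of both $k$ and of the moving centers---the standard Campanato-type bookkeeping---together with confirming that hypotheses (1)--(2) of Theorem \ref{thm.liou} truly survive the limit, even though $I_\infty$ need not be displayed as an infimum of the $L_\alpha$'s.
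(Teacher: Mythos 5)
Your plan matches the paper's: the author's own proof of this proposition is a one-line remark stating that it follows from the argument of Proposition \ref{prop.blow1} with $\gamma:=2s-\alpha-\varepsilon$, which is exactly the blow-up/compactness/Liouville scheme you describe (with the key observation, which you make explicit, that the sub-linear growth rate $R^{2s-\varepsilon}$ kills the linear part $p\cdot x$ permitted by Theorem \ref{thm.liou} when $s=1/2$). The only deviations are cosmetic bookkeeping choices --- you track $\inf_c\|u_k-c\|_{L^\infty}$ instead of the $[u_k]_{C^\alpha}$ seminorm used in the paper's $\theta(r)$, and you subtract constants rather than degree-$\lfloor 2s\rfloor$ polynomials, which is the correct reading here since for $s=1/2$ the target exponent $2s-\varepsilon$ lies below $1$ --- so the argument is essentially identical.
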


\begin{proof}[Proof of Proposition \ref{prop.blow1}]

We may assume $\|f\|_{L^\infty(B_1)}+[u]_{C^\alpha(\mathbb{R}^n)}\leq 1$. We argue by contradiction and assume that \eqref{eq.c2s} does not hold. Then there exist sequences $u_k,f_k$ and $I_k$ such that 

\begin{itemize}

\item $I_ku=\inf_{\alpha\in\mathcal{A}_k}L_\alpha u(x)$ with $L_\alpha$ of the form \eqref{eq.L} and $K_\alpha$ satisfying \eqref{eq.ker}, 

\item $u_k$ is a bounded viscosity solution of $I_ku_k=f_k$ in $B_1$,

\item and $\|f_k\|_{L^\infty(B_1)}+[u_k]_{C^\alpha(\mathbb{R}^n)}\leq 1$

\end{itemize}
but 
\begin{equation}\label{eq.blow}
[u_k]_{C^{2s}(B_{1/2})}\longrightarrow\infty\quad\textrm{ as }\quad k\rightarrow\infty.
\end{equation}

Let us set 
\[
\gamma:=2s-\alpha\quad\textrm{ and }\quad \nu:=\lfloor 2s\rfloor
\]
and notice that
\[
\sup_k\sup_{z\in B_{1/2}}\sup_{r>0}r^{-\gamma}[u_k]_{C^\alpha(B_r(z))}=\infty.
\]
Next, we define
\[
\theta(r):= \sup_k\sup_{z\in B_{1/2}}\sup_{r'>r}(r')^{-\gamma}[u_k]_{C^\alpha(B_{r'}(z))}.
\]
Note that 
\begin{enumerate}

\item $\theta$ is nonincreasing in $r$ and is finite for any $r>0$ by the assumptions on $u_k$,

\item $\theta(r)\longrightarrow\infty$ as $r\rightarrow0^+$.

\end{enumerate}  

Hence, we can find a sequence of radii $r'_m\geq1/m$, $k_m$ and $z_m\in B_{1/2}$ such that for each $m\in\mathbb{N}$ 
\begin{equation}\label{eq.nondegm}
(r'_m)^{-\gamma}[u_{k_m}]_{C^\alpha(B_{r'_m}(z_m))}\geq \frac{1}{2}\theta(1/m)\geq\frac{1}{2}\theta(r'_m).
\end{equation}

Next we define $p_{k,r,z}(\cdot-z)$ the polynomial of degree at most $\nu$ that best fits $u_k$ in $B_r(z)$ in the least squares sense, that is $p_{k,r,z}$ minimizes
\[
\int_{B_r(z)}(u_k(x)-p(x))^2\:dx
\]
over all polynomial $p$ of degree at most $\nu$. Note that in particular $p_{k,r,z}$ is either a constant or a linear function.

Now we set
\[
u_m:=u_{k_m}\quad\textrm{ and }\quad p_m:=p_{k_m,r'_m,z_m}
\]
to simplify notation and define the blow-up sequence
\[
v_m(x):=\frac{u_m(z_m+r'_mx)-p_m(r'_m x)}{(r'_m)^{\alpha+\gamma}\theta(r'_m)}.
\] 

Then, the minimization condition implies
\begin{equation}\label{eq.ortm}
\int_{B_1}v_m(x)p(x)\:dx=0
\end{equation}
for any polynomial $p$ of degree at most $\nu$. Also  \eqref{eq.nondegm} implies the following non-degeneracy condition:
\begin{equation}\label{eq.nondeg}
[v_m]_{C^\alpha(B_1)}\geq 1/2
\end{equation}

Moreover, we have the following growth control: 
\begin{equation}\label{eq.growthm}
[v_m]_{C^\alpha(B_R)}\leq CR^\gamma.
\end{equation}
Indeed 
\begin{align*}
[v_m]_{C^\alpha(B_R)} & = \frac{1}{(r'_m)^\gamma\theta(r'_m)} [u_m]_{C^\alpha(B_{r'_mR}(z_m))} \\
 					  & = \frac{R^\gamma}{(r'_mR)^\gamma\theta(r'_m)} [u_m]_{C^\alpha(B_{r'_mR}(z_m))}  \\ 					   					  
 					  & \leq CR^\gamma
\end{align*}
where we used the definition of $\theta$ and its monotonicity. This, when $R=1$ implies $\|v_m-q\|_{L^\infty(B_1)}\leq C$ and therefore we have
\begin{equation}\label{eq.growthm1}
\|v_m\|_{L^\infty(B_1)}\leq C.
\end{equation}

Next we claim that the sequence $\{v_m\}_m$ converges (up to a subsequence) to a function $v$ for which the Liouville theorem \ref{thm.liou} applies. To see this, we first note that $\{v_m\}_m$ is locally uniformly bounded and equicontinuous by \eqref{eq.growthm} and \eqref{eq.growthm1}. Then, the Arzlel\`a-Ascoli theorem assures the existence of a continuous function $v$ such that a subsequence of $\{v_m\}_m$ (which we keep denoting $\{v_m\}_m$) satisfies
\[
v_m\longrightarrow v\quad\textrm{ locally uniformly in }\mathbb{R}^n. 
\]

Let us then verify that such $v$ satisfies the hypothesis of Theorem \ref{thm.liou}. First, passing to the limit in \eqref{eq.growthm} we see that 
\[
[v]_{C^\alpha(B_R)}\leq CR^\gamma.
\]
so the growth condition is satisfied. 

Next, recall that $u_m$ satisfies
\begin{equation}\label{eq.imu}
I_mu_m=f_m\quad\textrm{ in }\quad B_1
\end{equation}
in the viscosity sense for any $m\geq 1$. Notice also that the operators of order $2s$ that make up the class $\mathcal{L}$ vanish identically when evaluated in linear functions (for $s>1/2$) or constants (for any $s$) so we have, using the scaling, 
\[
I_mv_m=\tilde{f}_m:=\frac{f_m}{\theta(r'_m)}\quad\textrm{ in }\quad B_{1/r'_m}.
\]
Then, applying Lemmas \ref{lem.stab2} and \ref{lem.stab3} we get 
\[
Iv=0\quad\textrm{ in }\quad \R^n
\]
and therefore we may assume it is $C^{2,\varepsilon}$ for some $\varepsilon>0$, for instance, applying the approximation argument Lemma 2.1 in \cite{CS3}. 

On the other hand, according to the definition of ellipticity \eqref{eq.ellip} (and recalling that $\|f_m\|_{L^\infty(B_1)}\leq 1$) we get that for $x\in B_{1/2}(z_{k_m})$ and $|h|<1/2$ \eqref{eq.imu} implies
\[
\mathcal{M}^+(u_m(x+h)-u_m(x))\geq -1 \quad \textrm{ and }\quad \mathcal{M}^-(u_m(x+h)-u_m(x))\leq 1. 
\]

Also, using the scaling of the equations (see comment after Definition \ref{def.visc}),
\[
\mathcal{M}^+(v_m(x+h)-v_m(x))\geq \frac{-(r'_m)^{2s}}{(r'_m)^{\alpha+\gamma}\theta(r'_m)}
\]
and
\[
\mathcal{M}^-(v_m(x+h)-v_m(x))\leq \frac{(r'_m)^{2s}}{(r'_m)^{\alpha+\gamma}\theta(r'_m)}
\]
for $|x|\leq \frac{1}{2r'_m}$ in the viscosity sense. 

Now, it is clear that 
\[
v_m(\cdot+h)-v_m(\cdot)\longrightarrow v(\cdot+h)-v(\cdot)\quad\textrm{ locally uniformly in }\mathbb{R}^n
\]
and by our definition of $\theta$, $\alpha$, $\gamma$ and $r'_m$
\[
\frac{(r'_m)^{2s}}{(r'_m)^{\alpha+\gamma}\theta(r'_m)}\longrightarrow 0 
\]
as $m\rightarrow\infty$. 

Also, by the Dominated Convergence Theorem, the pointwise convergence of $v_m(\cdot+h)-v_m(\cdot)$ to $v(\cdot+h)-v(\cdot)$ and the growth condition satisfied by both $v_m$ and $v$ (notice that $\gamma<2s$), it follows that
\[
\int_{\mathbb{R}^n}\frac{|(v_m(x+h)-v_m(x))-(v(x+h)-v(x))|}{1+|x|^{n+2s}}\:dx\longrightarrow 0
\]
as $m\rightarrow\infty$, so taking
\[
w_m(\cdot):=v_m(\cdot+h)-v_m(\cdot)\quad\textrm{ and }\quad w(\cdot):=v(\cdot+h)-v(\cdot)
\]
in Lemma \ref{lem.stab} we obtain that the increments of $v$ are global solutions (applying such Lemma to larger and larger values of $R$), i.e. $v$ is a viscosity (and since it is $C^2$ pointwise) solution of
\[
\mathcal{M}^+ (v(\cdot+h)-v(\cdot)) \geq 0\quad\textrm{ and }\quad \mathcal{M}^- (v(\cdot+h)-v(\cdot))\leq 0 \quad\textrm{ in }\quad \mathbb{R}^n. 
\]

Finally, we need to verify condition (2) of Theorem \ref{thm.liou}. Let $\mu\in L^1(\mathbb{R}^n)$ with compact support and $\int_{\mathbb{R}^n}\mu(h)\:dh=1$. Then using Jensen's inequality we get
\begin{align*}
-2\|f_m\|_{L^\infty(B_1)}&\leq  \int Iu_m(\cdot+h)\mu(h)\:dh-Iu_m(\cdot) \\
   & \leq I\left(\int u_m(\cdot+h)\mu(h)\:dh\right) -Iu_m(\cdot) \\
   & \leq \mathcal{M}^+\left(\int u_m(\cdot+h)\mu(h)\:dh -u_m(\cdot)\right) \\
   & = \mathcal{M}^+\left(\int u_m(\cdot+h) -u_m(\cdot)\mu(h)\:dh\right) \\
\end{align*}
and proceeding just as in the previous step we get
\[
0 \leq \mathcal{M}^+_{\mathcal{L}}\Big(\int v(\cdot+h)-v(\cdot)\mu(h)\:dh\Big)\quad\textrm{ in }\quad \mathbb{R}^n
\]
as we wanted.
 
Therefore, we can apply Theorem \ref{thm.liou} to get that 
\[
v(x)=p\cdot x+q
\]
for some $p\in\mathbb{R}^n$ and $q\in\mathbb{R}$. Finally, passing to the limit in \eqref{eq.ortm} we obtain that $v\equiv0$. But passing to the limit in \eqref{eq.nondeg} we get that $[v]_{C^\alpha(B_1)}\geq 1/2$, thus arriving to a contradiction.    
\end{proof}

\begin{proof}[Proof of Proposition \ref{prop.blow2}]
The proof follows as the previous one considering $\gamma:=2s-\alpha-\varepsilon$ and following the same steps. 
\end{proof}

\begin{rem}\label{rem.c}
It is easy to check that the proofs above work \emph{mutatis mutandis} for operators with lower order terms of the form 
\begin{equation}\label{eq.Lc}
Iu(x)=\inf_{\alpha\in\mathcal{A}}\left\lbrace L_\alpha u(x)+c_\alpha(x)\right\rbrace
\end{equation}
as long as we have a uniform bound on the $L^\infty$ norm of $c_\alpha$, i.e. 
\[
\|c_\alpha\|_{L^\infty(B_1)}\leq M\quad\textrm{ for all }\alpha\in\mathcal{A}
\]
for some $M>0$. Should that be the case, the constant $M$ also appears on the right hand side of \eqref{eq.c2s} (or \eqref{eq.c2sep}).
\end{rem}

With the aid of the previous propositions and Remark \ref{rem.c} we are in position to give the proof of our main result:

\begin{proof}[Proof of Theorem \ref{thm.c2s}]

We will proof \eqref{eq.2} using Proposition \ref{prop.blow1}; the proof of \eqref{eq.3} is completely analogous using Proposition \ref{prop.blow2} instead.

First, let us consider a cutoff function $\eta\in C^\infty_c(B_1)$ (say $\eta\equiv0$ outside $B_{7/8}$) such that $0\leq \eta\leq 1$ and $\eta \equiv 1$ in $B_{3/4}$. Notice that by the estimates in \cite{Se1} we have an interior $C^\alpha$ estimate for $u$ (in terms of $\|u\|_{L^\infty(\R^n)}$ and $\|f\|_{L^\infty(B_1)}$) and hence the function $\eta u$ belongs to $C^\alpha(\R^n)$ for some $\lfloor 2s\rfloor<\alpha<2s$. Moreover, since $1-\eta$ vanishes in $B_{3/4}$, by letting $\tilde{u}:=(1-\eta)u$ we see that we can control
\begin{align*}
|L_\alpha \tilde{u}(x)| & = \Big|\int_{\mathbb{R}^n}(\tilde{u}(x+y)+\tilde{u}(x-y)-2\tilde{u}(x))K_\alpha(y)\:dy\Big| \\ 
						& \leq \Lambda\int_{\mathbb{R}^n\setminus B_{1/8}}\frac{|\tilde{u}(x+y)+\tilde{u}(x-y)-2\tilde{u}(x)|}{|y|^{n+2s}}\:dy \\ 
						& \leq C\|\tilde{u}\|_{L^\infty(\mathbb{R}^n\setminus B_{1/8})}\leq C\|u\|_{L^\infty(\mathbb{R}^n)}
\end{align*}
for any $x\in B_{5/8}$.

Therefore, we see that 
\[
f(x)=Iu(x)=I(\eta u+(1-\eta)u)(x)=\inf_{\alpha\in\mathcal{A}}\left\lbrace L_\alpha (\eta u)(x)+c_\alpha(x)\right\rbrace\quad\textrm{ in }\quad B_{5/8}
\]
in the viscosity sense with 
\[
c_\alpha(x):=L_\alpha \tilde{u}(x)
\]
so that $\eta u$ satisfies an equation with an operator of the form \eqref{eq.Lc} with uniformly bounded lower order terms and hence by rescaling Proposition \ref{prop.blow1} we get
\[
[\eta u]_{C^{2s}(B_{5/16})}\leq C\left(\|f\|_{L^\infty(B_{5/8})}+[\eta u]_{C^\alpha(\mathbb{R}^n)}+\|u\|_{L^\infty(\mathbb{R}^n)}\right).
\] 
Next, we note that 
\[
[u]_{C^{2s}(B_{5/16})}=[\eta u]_{C^{2s}(B_{5/16})}\quad\textrm{ and }\quad [\eta u]_{C^\alpha(\mathbb{R}^n)}=[\eta u]_{C^\alpha(B_1)}.
\]
so that the previous estimate can be rewritten as follows
\begin{equation}\label{eq.estimate}
[u]_{C^{2s}(B_{5/16})}\leq C\left(\|f\|_{L^\infty(B_{5/8})}+\|\eta u\|_{C^\alpha(B_1)}+\|u\|_{L^\infty(\mathbb{R}^n)}\right).
\end{equation}

We ought to control the second term on the right hand side of \eqref{eq.estimate}. To this end, we will use the \emph{interior norms (and seminorms)} which are defined as follows (see Gilbarg and Trudinger \cite{GT}): if $\gamma=k+\gamma'$ (with $k=\lfloor\gamma\rfloor$) 
\[
[w]_{\gamma,\Omega}^{(\delta)}:=\sup_{\substack{x\neq y \\ x,y\in\Omega}}\left(d_{x,y}^{\gamma+\delta}\frac{|D^kw(x)-D^kw(y)|}{|x-y|^{\gamma'}}\right).
\]
and 
\[
\|w\|_{\gamma,\Omega}^{(\delta)}:=\sum_{j=1}^k\sup_{x\in\Omega}\left(d_x^{j+\delta}|D^jw(x)|\right)+[w]_{\gamma,\Omega}^{(\delta)}
\]
where
\[ 
d_x:=\textrm{dist}(x,\partial\Omega)\quad\textrm{ and }\quad d_{x,y}:=\min\{d_x,d_y\}.
\]

Using these norms, we can rescale \eqref{eq.estimate} and apply it to balls of radius $\tau>0$. Taking the supremum over all such balls satisfying $B_{2\tau}\subset B_1$ we get
\[
[u]_{2s,B_1}^{(0)}\leq C\left(\|f\|_{0,B_1}^{(2s)}+\|\eta u\|_{\alpha,B_1}^{(0)}+\|u\|_{L^\infty(\mathbb{R}^n)}\right)
\]
and from here deduce
\[
\|u\|_{2s,B_1}^{(0)}\leq C\left(\|f\|_{0,B_1}^{(2s)}+\|u\|_{L^\infty(\mathbb{R}^n)}\right).
\]
In particular
\[
\|u\|_{C^{2s}(B_{1/2})}\leq C\left(\|f\|_{L^\infty(B_1)}+\|u\|_{L^\infty(\mathbb{R}^n)}\right)
\]
which is what we wanted to prove. 
\end{proof}

\section{Application: nonlocal two membranes problem}\label{sec.two}

In this section we give an application of Theorem \ref{thm.c2s} to the two membranes problem. The two membranes problem has attracted much interest in the PDE community in the past years, see for instance \cite{CDS} and \cite{CDV} and the references therein. In \cite{CDV} a ``bid and ask'' model from Mathematical Finance is considered where an asset has a price that varies randomly and a buyer and a seller have to agree on a price for a transaction to take place. A local (second order) approach to the problem is discussed there. However, it is often the case that the price of an asset can have jumps, in the sense that it can change abruptly, see \cite{CT16}. Hence, it makes sense to study the problem from \cite{CDV} but taking into account a ``long range interactions'' model, which of course is done through nonlocal equations. The problem has the following form:

\begin{equation}\label{eq.nmain}
  \left\{ \begin{array}{rcll}
  u & \geq & v & \textrm{ in } B_1 \\
  I_1u & \leq & f & \textrm{ in } B_1 \\
  I_2v & \geq & g & \textrm{ in } B_1 \\
  I_1u & = & f & \textrm{ in } B_1\cap\Omega \\
  I_2v & = & g & \textrm{ in } B_1\cap\Omega \\
  u & = & u_0 & \textrm{ in } \mathbb{R}^n\setminus B_1 \\ 
  v & = & v_0 & \textrm{ on } \mathbb{R}^n\setminus B_1. 
\end{array}\right.
\end{equation}

Here $\Omega:=\{u>v\}$ and $I_1$, $I_2$ are two fully nonlinear operators with $I_1$ convex and $I_2v=-I_1(-v)$ (hence concave) of the form
\begin{equation}\label{eq.ops}	
I_1u(x):=\sup_{\alpha\in\mathcal{A}} L_\alpha u(x)\quad\textrm{ and }\quad I_2v(x):=\inf_{\alpha\in\mathcal{A}} L_\alpha v(x)
\end{equation}
i.e. of the form \eqref{eq.L}-\eqref{eq.ker}.

It is woth pointing out that the nonlocal theory for obstacle-type problems such as \eqref{eq.nmain} is less developed than its local counterpart, so that new challenges and difficulties arise. Indeed, a natural first step when dealing with problems like \eqref{eq.nmain} is to prove a ``basic'' (in the sense that is not expected to be optimal) regularity estimate. In the local case this is achieved by showing that the operator is bounded and hence solutions belong to the space $W^{2,p}$ for any $p<\infty$, which in turn implies that solutions are in $C^{1,\alpha}$ for any $\alpha<1$ by the Sobolev Embedding Theorem. Such a reasoning is not trivially adapted to the nonlocal setting; indeed, there is a lack of estimates analogous to the local $W^{2,p}$ ones and there are only partial results, mainly the one proved by Yu in \cite{Y}. 

Here, however, we are able to circumvent this issue in our case and give a basic regularity estimate that follows as an easy consequence of Theorem \ref{thm.c2s}. The idea is to ``skip the $W^{2,p}$ estimate'' and get directly the type of regularity that would follow form the Sobolev Embedding.

\begin{thm}
Let $u$ and $v$ be bounded solutions of \eqref{eq.nmain} in the viscosity sense and $f,g\in C(\overline B_1)$. Then 
\begin{enumerate}
\item if $s\neq 1/2$ we have $u,v\in C^{2s}(B_{1/2})$ and 
\begin{equation}\label{eq.bas}
\|u\|_{C^{2s}(B_{1/2})},\|v\|_{C^{2s}(B_{1/2})}\leq C
\end{equation} 

\item if $s= 1/2$, for any $\varepsilon>0$ we have $u,v\in C^{2s-\varepsilon}(B_{1/2})$ and 
\begin{equation}\label{eq.basep}
\|u\|_{C^{2s-\varepsilon}(B_{1/2})},\|v\|_{C^{2s-\varepsilon}(B_{1/2})}\leq C.
\end{equation} 
\end{enumerate}
The constant in \eqref{eq.bas} depends only on $n$, $s$, $\lambda$, $\Lambda$, $\|u\|_{L^\infty(\mathbb{R}^n)}$, $\|v\|_{L^\infty(\mathbb{R}^n)}$, $\|f\|_{L^\infty(B_1)}$, $\|g\|_{L^\infty(B_1)}$ and the constant in \eqref{eq.basep} depends also on $\varepsilon$. 
\end{thm}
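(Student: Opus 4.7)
The plan is to reduce the statement to two applications of Theorem~\ref{thm.c2s}: one to $u$ through the convex operator $I_1$ (which becomes a concave operator of the form \eqref{eq.L1}--\eqref{eq.ker} after replacing $u$ by $-u$) and one to $v$ through the concave operator $I_2$. Once each of $u$ and $v$ is shown to satisfy a two-sided viscosity bound of the form ``$L^\infty$ function $\leq I_i w \leq L^\infty$ function'', the machinery from the proof of Theorem~\ref{thm.c2s} applies and yields the claimed H\"older estimates.

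The key reduction is to upgrade the one-sided inequalities of \eqref{eq.nmain} to the two-sided viscosity inequalities
\[
\min(f,g) \leq I_1 u \leq f \qquad\text{and}\qquad g \leq I_2 v \leq \max(f,g) \qquad \text{in } B_1.
\]
The upper bound on $I_1 u$ and the lower bound on $I_2 v$ are directly assumed in \eqref{eq.nmain}. For the new lower bound on $I_1 u$, take a $C^2$ test function $\varphi$ touching $u$ from above at $x_0 \in B_1$. If $x_0 \in \Omega = \{u > v\}$, then $u$ is a genuine viscosity solution of $I_1 u = f$ in a neighborhood of $x_0$, so $I_1 \varphi(x_0) \geq f(x_0) \geq \min(f,g)(x_0)$. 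If instead $x_0 \in \{u = v\}$, then $u \geq v$ together with $\varphi \geq u$ near $x_0$ and $\varphi(x_0) = u(x_0) = v(x_0)$ force $\varphi$ to also touch $v$ from above at $x_0$, so by the subsolution property of $v$ we obtain $I_2 \varphi(x_0) \geq g(x_0)$; since $I_1 = \sup_{\alpha \in \mathcal{A}} L_\alpha$ and $I_2 = \inf_{\alpha\in\mathcal{A}} L_\alpha$ are taken over the same family \eqref{eq.ops}, the pointwise inequality $I_1 w \geq I_2 w$ holds for any $w$ that is $C^{1,1}$ at $x_0$, and hence $I_1 \varphi(x_0) \geq I_2 \varphi(x_0) \geq g(x_0) \geq \min(f,g)(x_0)$. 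The upper bound on $I_2 v$ follows from an entirely symmetric argument in which the roles of $u$ and $v$ (and of $f$ and $g$) are exchanged.

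Setting $M := \max(\|f\|_{L^\infty(B_1)}, \|g\|_{L^\infty(B_1)})$, the two-sided viscosity inequalities together with \eqref{eq.ellip} yield, exactly as in the derivation preceding (eq.imu) in Section~\ref{sec.main}, the Pucci increment bounds $\mathcal{M}^+(u(\cdot+h) - u(\cdot)) \geq -2M$ and $\mathcal{M}^-(u(\cdot+h) - u(\cdot)) \leq 2M$ (and analogously for $v$). These are the only consequences of the equation used in the proofs of Propositions~\ref{prop.blow1} and \ref{prop.blow2}, so the blow-up/rescaling scheme there applies verbatim; combined with the cut-off argument from the proof of Theorem~\ref{thm.c2s}, this produces the estimates \eqref{eq.bas} and \eqref{eq.basep} with constants depending only on the parameters listed in the statement.

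The main delicate point is reconciling the formulation of Theorem~\ref{thm.c2s}, which is stated for a genuine equation $Iu = f$ with continuous right-hand side, with the two-sided viscosity inequalities obtained above. As sketched, this is essentially a matter of inspecting the proofs in Section~\ref{sec.main}, since the only use of the equation is through the translation-invariant Pucci bounds on increments, which are preserved under the two-sided inequality. If one prefers to apply Theorem~\ref{thm.c2s} as stated, the alternative is to penalize the obstacle constraints in \eqref{eq.nmain} (producing approximate solutions of bona fide equations $I_1 u^\varepsilon = f^\varepsilon$ and $I_2 v^\varepsilon = g^\varepsilon$ with continuous right-hand sides uniformly bounded by $M$), obtain the estimates \eqref{eq.bas}--\eqref{eq.basep} for $u^\varepsilon, v^\varepsilon$, and pass to the limit via Lemmas~\ref{lem.stab} and \ref{lem.stab3}.
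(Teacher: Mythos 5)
Your proposal is correct and follows the same route as the paper: show a two-sided viscosity bound $|I_1 u|\leq C$ (respectively for $v$) by the case split on $x_0\in\Omega$ versus $x_0\in\{u=v\}$, using in the latter case that a test function touching $u$ from above on the contact set also dominates $v$ there and that $I_1\geq I_2$ since they are $\sup$ and $\inf$ over the same family, then invoke Theorem~\ref{thm.c2s}. The only difference is that you flag the mild gap that Theorem~\ref{thm.c2s} is stated for an equation rather than a two-sided viscosity inequality and sketch how to close it (either by inspecting that the blow-up proof uses only the increment bounds, or via penalization); the paper applies Theorem~\ref{thm.c2s} directly without comment, so your extra care here is a genuine, if small, improvement in rigor.
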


\begin{proof}
We prove the result for $u$, the proof for $v$ is analogous. We will show that $|I_1u|\leq C$ in the viscosity sense for some universal constant $C$. The result will then follow from Theorem \ref{thm.c2s}. 

Let $\varphi$ function touching $u$ by below at $x_0\in B_1$ and $C^2$ in some small neighborhood of $x_0$, coinciding with $u$ outside. Recall that $u$ is a viscosity supersolution across the whole ball (disregarding if $x_0$ is in the contact set $\{u=v\}$ or not), so we have  

\[
I_1\varphi(x_0)\leq f(x_0)\leq\|f\|_{L^\infty(B_1)}. 
\]

If instead $\varphi$ touches $u$ by above, we separate two cases:

\textbf{Case 1:} if $x_0\in\Omega$ then $u$ is also a subsolution and we get 

\[
I_1\varphi(x_0)\geq f(x_0)\geq-\|f\|_{L^\infty(B_1)}
\]
as before. 

\textbf{Case 2:} if $x_0\notin\Omega$, notice that $\varphi$ also touches $v$ by above and lies above $v$ (because it lies above $u$). Hence $\varphi$ would lie above the corresponding test function that coincides with $v$ outside a neighborhood (and coincide with it at $x_0$). Also $v$ is a subsolution for $I_2$ across the whole ball. Then 

\[
I_2\varphi(x_0)\geq g(x_0)\geq-\|g\|_{L^\infty(B_1)}.
\]

But then
\[
I_1 \varphi(x_0)\geq g(x_0)\geq-\|g\|_{L^\infty(B_1)}
\]
and we are done.
\end{proof}

\begin{rem}
From the proof it follows that it would actually suffice $I_1\geq I_2$ to achieve the $C^{2s}$ regularity. 
\end{rem}

{\bf Acknowledgments}
The author was partially supported by a Conicet scholarship. 

The author would like to thank professor Caffarelli for all his guidance and support during his PhD years.

\end{document}